\crefname{theorem}{Theorem}{Theorems}
\crefname{thm}{Theorem}{Theorems}
\crefname{mainthm}{Theorem}{Theorems}
\crefname{lemma}{Lemma}{Lemmas}
\crefname{lem}{Lemma}{Lemmas}
\crefname{remark}{Remark}{Remarks}
\crefname{claim}{Claim}{Claims}
\crefname{subclaim}{Sub-claim}{Sub-claims}
\crefname{prop}{Proposition}{Propositions}
\crefname{proposition}{Proposition}{Propositions}
\crefname{defn}{Definition}{Definitions}
\crefname{corollary}{Corollary}{Corollaries}
\crefname{conjecture}{Conjecture}{Conjectures}
\crefname{question}{Question}{Questions}
\crefname{chapter}{Chapter}{Chapters}
\crefname{section}{Section}{Sections}
\crefname{figure}{Figure}{Figures}
\theoremstyle{plain}
\newtheorem{thm}{Theorem}
\newtheorem*{thm*}{Theorem}
\newtheorem{prop}[thm]{Proposition}
\theoremstyle{definition}
\theoremstyle{remark}
\newcommand{\eps}{\varepsilon}
\renewcommand{\P}{{\bf P}}
\newcommand{\E}{{\bf E}}
\newcommand{\G}{{\mathcal G}}
\newcommand{\Gnp}{\G_{n,p}}
\renewcommand{\le}{\leqslant}
\renewcommand{\ge}{\geqslant}
\author[Z. Bartha]{Zsolt Bartha}
\address{Alfr\'{e}d R\'{e}nyi Institute of Mathematics}
\email{bartha@renyi.hu}
\author[B. Kolesnik]{Brett Kolesnik}
\address{Department of Statistics, 
University of Oxford}
\email{brett.kolesnik@stats.ox.ac.uk}
\author[G. Kronenberg]{Gal Kronenberg}
\address{Department of Mathematics, 
University of Oxford}
\email{gal.kronenberg@maths.ox.ac.uk}
\keywords{bootstrap percolation, 
cellular automaton, 
critical threshold, 
phase transition, 
random graph, weak saturation}
\subjclass[2010]{05C35, 
05C80, 
05C99, 
60K35, 
68Q80} 
\begin{document}

\title
[$H$-percolation with a random $H$]
{$H$-percolation with a random $H$}

\begin{abstract}
In $H$-percolation, we start with an 
Erd\H os--R\'enyi graph 
${\mathcal G}_{n,p}$
and then iteratively add edges that complete  
copies of $H$. 
The process percolates if
all edges missing from ${\mathcal G}_{n,p}$
are eventually added. 
We find the critical threshold
$p_c$ when 
$H={\mathcal G}_{k,1/2}$ is 
uniformly random,  
solving a problem of
Balogh, Bollob\'as and Morris. 
\end{abstract}

\maketitle

\section{Introduction}\label{S_intro}

Following 
Balogh, Bollob\'as and Morris \cite{BBM12}, 
we fix a graph $H$, and 
begin with an  
Erd\H os--R\'enyi graph
$\G_0=\Gnp$. 
Then, for $t\ge1$, 
we obtain $\G_t$ 
from $\G_{t-1}$ 
by adding every edge that creates a new copy of 
$H$. 
We let  $\langle\Gnp\rangle_H=\bigcup_{t\ge0}  \G_t$
denote the graph containing all eventually added edges. 
When $\langle\Gnp\rangle_H = K_n$, we say that 
the process {\it $H$-percolates,}
or equivalently, in the terminology of  
Bollob\'as \cite{B67}, that 
$\Gnp$ is {\it weakly $H$-saturated}. 

$H$-percolation 
generalizes
bootstrap percolation 
\cite{PRK75,CLR79}, which is one of the most well-studied of all  
cellular automata \cite{U50,vN66}. 
Such processes model 
evolving  networks, in which sites
update their status according to the behavior of their neighbors. 
Although the dynamics are local, they can lead to 
global behavior, 
emulating various real-world phenomena
of interest,  
such as  
tipping points,
super-spreading, self-organization
and collective decision-making. 

The {\it critical $H$-percolation threshold} 
is defined to be the point 
\[
p_c(n,H)=\inf\{p>0:\P(\langle\Gnp\rangle_H=K_n)\ge 1/2\}, 
\]
at which $\Gnp$ 
becomes likely 
to $H$-percolate.
Problem 1 in \cite{BBM12} asks for 
\[
\ell(H)=-\lim_{n\to\infty} \frac{\log p_c(n,H)}{\log n}
\]
for {\it every} graph $H$. 
Finding $\ell$ corresponds to locating $p_c=n^{-\ell+o(1)}$ 
up to poly-logarithmic factors.  
The authors state that ``Problem 1 is likely to be hard.'' Indeed, 
e.g., even the cases $\ell(K_{2,t})$ remain open, for all
$t\ge5$, see Bidgoli et al.\ \cite{BMTR18}.

In this note, we answer Problem 1 for {\it random} graphs $H$. 
We find that typically $\ell(H)=1/\lambda(H)$, where 
\[
\lambda(H)=\frac{e_H-2}{v_H-2}
\]
is the adjusted edge per vertex ratio in \cite{BBM12}, 
for graphs $H$ with $v_H$  vertices and $e_H$ 
edges. 
We recall that Theorem 1 in \cite{BBM12} shows that
$\ell(K_k)=1/\lambda(K_k)$ 
for cliques. 
This corresponds to the case $\alpha=1$
in the following result, 
which, in some sense, shows that
$\ell=1/\lambda$
is ``stable.'' 

\begin{thm}
\label{T_main}
Fix  
$0< \alpha \le 1$.
Then, with high probability,  as $k\to\infty$, we have that 
$\ell({\mathcal G}_{k,\alpha})=1/\lambda({\mathcal G}_{k,\alpha})$. 
\end{thm}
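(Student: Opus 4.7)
The plan is to prove matching bounds $p_c(n, \mathcal{G}_{k,\alpha}) = n^{-1/\lambda(\mathcal{G}_{k,\alpha}) \pm o(1)}$, each holding with high probability over the choice of $H = \mathcal{G}_{k,\alpha}$. The lower bound on $p_c$ (which gives $\ell(H) \le 1/\lambda(H)$) reduces to a balanced-subgraph property of $H$, while the upper bound (which gives $\ell(H) \ge 1/\lambda(H)$) is a genuine percolation construction in the spirit of \cite{BBM12}.

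For the lower bound on $p_c$, the first step is to verify that $H$ is \emph{strictly $\lambda$-balanced} with high probability: $\lambda(H) > \lambda(H')$ for every proper subgraph $H' \subsetneq H$ with $v_{H'} \ge 3$. This follows from standard Chernoff + union-bound concentration of $e(H[V'])$ over vertex subsets $V' \subsetneq V(H)$, using that $\alpha$ is fixed; the marginal case $|V'| = k-1$ reduces to $\deg_{\min}(H) > \lambda(H) \approx \alpha(k-1)/2$, which holds since $\deg_{\min}(H) = \alpha(k-1) - o(k)$ w.h.p. Given strict balance, a standard dual-weight / first-moment argument (essentially the lower-bound half of the clique case in \cite{BBM12}, whose proof is insensitive to the specific structure of $H$ beyond strict $\lambda$-balance) yields $p_c(n, H) \ge n^{-1/\lambda(H) + o(1)}$.

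For the upper bound on $p_c$, the plan is to adapt the iterative percolation argument of \cite{BBM12} for cliques. For $K_k$, the idea is to start from a small seed of edges present in $\Gnp$ at $p = n^{-1/\lambda(K_k) + \eps}$ and grow it step by step, completing each missing edge via a copy of $K_k$ whose other $k - 2$ vertices already lie in the growing structure. The analogue for random $H$ is a \emph{build sequence}: a vertex ordering $v_1, \dots, v_k$ of $V(H)$ whose partial-degree profile $d_i = |N_H(v_i) \cap \{v_1, \dots, v_{i-1}\}|$ matches the extension thresholds of $\Gnp$ at $p = n^{-1/\lambda(H) + \eps}$. One then argues that every missing edge in $K_n$ admits completion by some copy of $H$ traversed along a good build sequence, and that these completions cascade to cover $K_n$.

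The main obstacle I expect is this upper bound. The BBM clique argument exploits the vertex- and edge-transitivity of $K_k$, under which all build sequences are equivalent; for $\mathcal{G}_{k,\alpha}$ the edges play different roles, and one must instead argue that \emph{typical} build sequences succeed. The substitute for symmetry comes from exchangeability of $H$'s law: for any fixed build sequence the probability (over $H$) that it succeeds is either $1 - o(1)$ or $o(1)$, and by conditioning on a realization of $H$ whose neighbourhood and small-subgraph statistics are near their means one always finds a working sequence. The delicate calibration is that the expected number of extensions along a typical build sequence must match the critical exponent $1/\lambda(H)$ with no slack; any additional loss would open a gap between the two bounds on $p_c$ and spoil the identity $\ell = 1/\lambda$. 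A secondary concern is uniformity in $\alpha \in (0, 1]$: concentration constants degrade as $\alpha \to 0$ and must be tracked through the iteration without spoiling the threshold exponent.
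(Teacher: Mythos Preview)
Your strict-balance step is exactly what the paper does, and your Chernoff/union-bound sketch matches the paper's Proposition (the critical case is indeed $|V'|=k-1$, i.e.\ minimum degree versus $\lambda$). Once strict balance is in hand, however, you are proposing far more work than is needed, and the ``main obstacle'' you anticipate is not there.

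The point you are missing is that \cite{BBM12} already proves $\ell(H)\ge 1/\lambda(H)$ for \emph{every} balanced $H$, not only for cliques; the constructive percolation argument there is stated and proved in that generality. So there is no need to design build sequences, worry about the loss of vertex/edge-transitivity, or calibrate extension counts against $1/\lambda$: all of that is absorbed into the cited black box the moment you know $H=\mathcal G_{k,\alpha}$ is balanced. Likewise the matching bound $\ell(H)\le 1/\lambda_*(H)$ is proved in \cite{BK23} for every $H$ with $v_H\ge4$ and $\delta_H\ge2$, and $\lambda_*=\lambda$ precisely when $H$ is balanced. The paper's proof of the theorem is therefore literally: show $\mathcal G_{k,\alpha}$ is strictly balanced with high probability, then quote \cite{BBM12} and \cite{BK23}. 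Your proposal recovers this for the lower bound on $p_c$ but then, for the upper bound, sets out to reprove from scratch a result that is already available off the shelf; the concerns you raise about symmetry and exchangeability, while interesting, are solving a non-problem here.
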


The case $\alpha=1/2$ answers 
Problem 6 in 
\cite{BBM12}, that asks for ``bounds on $p_c(n,\G_{k,1/2})$ 
which hold with high probability as $k\to\infty$.'' 
We find that  
$p_c=n^{-1/\lambda+o(1)}$,  
except with exponentially
small probability.
In particular, this holds, almost surely, 
for all large $k$, 
by the Borel--Cantelli lemma.
Note that  
$\G_{k,1/2}$ is uniformly random 
(from the set of simple graphs on $k$ vertices).
In this sense, we find
$\ell(H)$
for ``most'' graphs $H$. 

In fact, our proof works for  
$\alpha\ge A(\log k)/k$, for some 
sufficiently
large constant $A$. 
See \cref{S_smallA} below 
for more on this. 

In closing, we note that
it is natural to ask about 
$k$ growing with $n$. 
For instance, $\lambda\approx k/4$ when $\alpha=1/2$, 
in which case     
$k\ll \log n$ appears to be 
the region of interest. 

\subsection{Acknowledgments} 
ZB is supported by the ERC Consolidator Grant 772466 “NOISE”.
GK is supported by the Royal Commission for the Exhibition of 1851.

\section{Background}
\label{S_back}

As defined in \cite{BBM12}, 
a graph $H$ is 
{\it balanced} if 
\begin{equation}\label{E_bal}
\frac{e_F-1}{v_F-2}\le \lambda(H), 
\end{equation}
for all subgraphs $F\subset H$ with $3\le v_F<v_H$. 
Otherwise, we call $H$ {\it unbalanced}. 
In \cite{BBM12}, it is shown that $\ell \ge 1/\lambda $ for all balanced $H$.
A sharper upper bound (on $p_c$) is proved in \cite{BK23} for 
{\it strictly balanced} graphs $H$, which  satisfy 
the above condition, with $\le$ replaced by $<$. 
Specifically, it is shown that $p_c\le O(n^{-1/\lambda})$
for all such $H$, replacing $n^{o(1)}$ with a constant.

On the other hand, 
in \cite{BK23} it is shown that 
$\ell\le 1/\lambda_*$ for all 
$H$, with $v_H\ge4$ and minimum degree $\delta_H\ge2$, where
\[
\lambda_*(H)=\min \frac{e_H-e_F-1}{v_H-v_F},
\]
minimizing over all subgraphs $F\subset H$ with $2\le v_F<v_H$. 
The quantity $\lambda_*$ is 
related to the ``cost'' of adding an edge, 
via the $H$-percolation dynamics, as depicted in \cref{F_lamstar}.

As observed in \cite{BK23}, we have that $\lambda_*\le \lambda$ in general, and 
$\lambda_*= \lambda$ if and only if $H$ is balanced. 
Moreover, when $H$ is balanced, single edges $F$ (when $v_F=2$) 
attain the minimum $\lambda_*$. 
When $H$ is strictly balanced, these are the only minimizers.

\begin{figure}[h!]\includegraphics[scale=1]{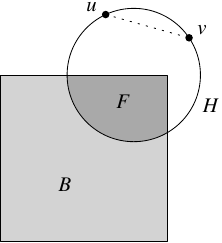}
\caption{
Suppose that edges in a ``base'' graph $B$ 
have already been added. 
Next, we
add an edge $\{u,v\}$
using a copy of $H$. 
The ``price''  
is $e_H-e_F-1$ edges 
and $v_H-v_F$  vertices, where $F=H\cap B$. 
Hence the edge per vertex ``cost'' 
is at least $\lambda_*$. 
}
\label{F_lamstar}
\end{figure}

\section{Proof}

To prove \cref{T_main}, we show that
 $\G_{k,\alpha}$ is 
 (strictly) balanced, 
 with high probability. 
 By the results from \cite{BBM12,BK23} discussed above, 
 the result follows. 
 
Let us give some intuition for why random graphs $\G_{k,\alpha}$, 
with sufficiently large 
edge probability 
$\alpha$, are likely to be balanced. 
If \eqref{E_bal} fails 
for some $F\subset \G_{k,\alpha}$, then 
the edge density of $F$ is larger than that of $\G_{k,\alpha}$. 
For instance, if $F$ includes all but $j$ vertices, then 
the number of edges between these $j$ vertices and $F$
is less than (roughly) $j$ times half the average degree in $F$. 
By the law of large numbers (when $k$ is large) 
this large deviation event is most likely to occur when $j=1$, 
and even then, it is quite unlikely.

We will use the following, standard Chernoff tail estimates. 
Recall that if $X$ is binomial with 
mean $\mu$ then, for $0<\delta<1$, 
\begin{equation}\label{E_Cher}
\P(|X/\mu-1|\ge \delta)
\le 2\exp[-\delta^2\mu/3]. 
\end{equation}
On the other hand, for $\delta\ge1$, 
\begin{equation}\label{E_CherBig}
\P(X\ge(1+\delta)\mu)
\le \exp[-\delta\mu/3]. 
\end{equation}

\begin{prop}
\label{L_ranH_SB}
Suppose that $A(\log k)/k\le \alpha\le 1$, 
for some sufficiently large
constant $A$. 
Then, almost surely, $\G_{k,\alpha}$ is strictly balanced, 
for all large $k$. In particular, 
$\ell({\mathcal G}_{k,\alpha})=1/\lambda({\mathcal G}_{k,\alpha})$, 
for all large $k$.
\end{prop}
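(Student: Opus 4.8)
The plan is to show that $\G_{k,\alpha}$ is strictly balanced with probability $1-O(k^{-2})$. Since $\sum_k k^{-2}<\infty$, the Borel--Cantelli lemma then gives the first assertion, and the second follows from the results recalled in \cref{S_back}: strict balance implies balance, so $\ell(\G_{k,\alpha})\ge 1/\lambda(\G_{k,\alpha})$ by \cite{BBM12}; and $\lambda_*=\lambda$ for balanced graphs, so $\ell(\G_{k,\alpha})\le 1/\lambda_*(\G_{k,\alpha})=1/\lambda(\G_{k,\alpha})$ by \cite{BK23}. (Here one also uses $v_{\G_{k,\alpha}}=k\ge4$ and that, by a union bound over vertices of degree at most $1$, the minimum degree is at least $2$ with probability $1-O(k^{-2})$ when $A$ is large; this also guarantees $e_G\ge2$, so that $\lambda\ge0$.)

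Write $G=\G_{k,\alpha}$ on vertex set $V=[k]$. Since the induced subgraph on a given vertex set has the most edges, $G$ is strictly balanced as soon as $\frac{e(G[S])-1}{|S|-2}<\frac{e_G-2}{k-2}$ for every $S\subseteq V$ with $3\le|S|\le k-1$. Fix such an $S$, put $j=|S|$, $S'=V\setminus S$, $i=|S'|=k-j$, and let $X_{S'}$ be the number of edges of $G$ meeting $S'$ and $e_S=e(G[S])$. These are \emph{independent}, with $X_{S'}\sim\mathrm{Bin}\bigl(\binom k2-\binom j2,\alpha\bigr)$ and $e_S\sim\mathrm{Bin}\bigl(\binom j2,\alpha\bigr)$, and since $e_G=X_{S'}+e_S$ a short manipulation turns the displayed inequality into
\[
(j-2)\,X_{S'}-i\,e_S>j-i-2.
\]
So it suffices to prove that, outside an event of probability $O(k^{-2})$, this holds for all such $S$ simultaneously.

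A direct computation gives $(j-2)\,\E X_{S'}-i\,\E e_S=\tfrac{\alpha i}{2}\bigl[(k-2)(j-2)-2\bigr]$, which for $3\le j\le k-1$ is at least of order $\alpha k^2$, hence — since $\alpha k\ge A\log k$ with $A$ large — exceeds the right-hand side ($|j-i-2|\le k$) by a factor of more than two. Consequently the failure event for a fixed $S$ forces either $X_{S'}\le(1-\eta)\,\E X_{S'}$ or $e_S\ge\E e_S+\eta'$, where $\eta$ is a small absolute constant and $\eta'$ is of order $\alpha k(j-2)$; bounding these via \eqref{E_Cher} and \eqref{E_CherBig} shows the failure probability for a fixed $S$ is at most $\exp(-c\,\alpha i k)+\exp(-c\,\alpha(j-2)k)$ for an absolute constant $c>0$ (the second summand being void once $j$ is small enough that $\E e_S+\eta'>\binom j2$). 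Using $\alpha k\ge A\log k$ and $\binom ki=\binom kj\le k^{\min(i,j)}$, summing over the choices of $S$ with $|S'|=i$ and then over $1\le i\le k-3$ yields a convergent geometric series bounded by $O(k^{-2})$, once $A$ is a large enough constant. The Borel--Cantelli lemma then finishes the proof.

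The delicate point is this final union bound, which is tight at the two ends of the range $1\le i\le k-3$. When $i$ is close to $k$ (so $j$ is small), $G[S]$ need not be concentrated about its mean, but $i\,e_S\le i\binom j2\le\tfrac12 kj^2$ is negligible beside $(j-2)\E X_{S'}$, which is of order $\alpha k^2 j$, so only the lower tail of $X_{S'}$ is relevant. When $i$ is small (so $j$ is close to $k$) one instead relies on $e_S$ concentrating about its large mean. Reconciling these two regimes in a single estimate, and choosing $A$ large enough to beat both the $\binom ki$ and the $\binom kj$ entropies, is the crux — and is precisely why the hypothesis reads $\alpha\ge A(\log k)/k$ rather than something weaker.
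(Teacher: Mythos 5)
Your proof is correct and follows the same overall strategy as the paper's: reduce strict balance to a statement about induced subgraphs $G[S]$, exploit the independence of $e_S$ and the complementary edge count, and apply the Chernoff bounds \eqref{E_Cher}--\eqref{E_CherBig} together with a union bound over $S$ weighted by $\binom{k}{|S|}$. Where you differ is in how the large-deviation event is handled. The paper conditions on $e_S=(1+\delta)\E(e_S)$, computes the downward deviation $\delta'$ this forces on the complementary count, optimizes the resulting exponent over $\delta$ by convexity, and splits into the cases $\delta<1$ and $\delta\ge1$ (with a further subdivision), finally paying an extra factor $O(s^2)$ in the union bound for the possible values of $e_S$. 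You instead rewrite the failure event as the linear inequality $(j-2)X_{S'}-i\,e_S\le j-i-2$, check that the mean of the left-hand side dwarfs the right-hand side by a factor of order $\alpha k\ge A\log k$, and conclude that one of the two independent variables must deviate by a definite amount; this avoids both the optimization over $\delta$ and the sum over values of $e_S$, at no cost in the final bound. Your algebra checks out, including the identity $(j-2)\E X_{S'}-i\,\E e_S=\tfrac{\alpha i}{2}[(k-2)(j-2)-2]$ and the tightness analysis at $i=1$ and $j=3$. A small bonus of your write-up is that you explicitly verify the hypothesis $\delta_H\ge2$ needed to invoke the upper bound $\ell\le 1/\lambda_*$ from \cite{BK23}, which the paper leaves implicit (in fact strict balance itself already rules out vertices of degree at most $1$ here, so either justification works).
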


\begin{proof}
If $\G_{k,\alpha}$ is not strictly balanced,   
then there is a subset 
$S\subset [k]$ of size $3\le s< k$, 
such that
\begin{equation}\label{E_LD0}
\frac{e_S-1}{s-2}
\ge
\frac{e(\G_{k,\alpha})-2}{k-2},
\end{equation}
where $e_S$ is the number of edges in $\G_{k,\alpha}$ 
with both endpoints in $S$. 
Let $e_S'
=e(\G_{k,\alpha})-e_S$ be the number of all other edges in $\G_{k,\alpha}$, 
with at most one endpoint
in $S$. For any given $S$, the 
random variables
$e_S$ and $e_S'$ are independent. 
Since 
\[
\frac{1}{s-2}-\frac{1}{k-2}
=\frac{k-s}{(s-2)(k-2)},
\]
\eqref{E_LD0} implies that 
\begin{equation}\label{E_LD}
\frac{e_S-1}{s-2}
\ge\frac{e_S'-1}{k-s}.
\end{equation}

Using the Chernoff bounds 
\eqref{E_Cher} and \eqref{E_CherBig}, we will estimate the probability
that \eqref{E_LD} occurs 
for some $S$ of size $3\le s<k$.
Note 
that 
\begin{equation}\label{E_Es}
\E(e_S)=\frac{s(s-1)}{2}\alpha,
\quad 
\E(e_S')=\frac{(k-s)(k+s-1)}{2}\alpha.
\end{equation}
Therefore, if 
$e_S=(1+\delta)\E(e_S)$ then, in order for 
\eqref{E_LD} to hold, we would require  
$e_S'\le (1-\delta'+o(1))\E(e_S')$, where
\begin{equation}\label{E_deltaprime}
\delta'=\frac{k-\delta s}{k+s}.
\end{equation}
Let $P_s(\delta)$ denote the probability that $e_S$ and $e_S'$
take such values. 
(We assume $\delta>-1$ throughout,  since clearly 
the right hand side of 
\eqref{E_LD0} will be positive, with high probability.)

{\it Case 1.} First, we consider the case that $\delta<1$.  
By \eqref{E_Cher} and \eqref{E_Es}, 
for any given $S$, 
it follows that 
$e_S$ and $e_S'$ take such values 
with 
probability at most 
$\exp[-\eps\alpha\vartheta(\delta)]$, 
for some $\eps>0$, 
where
\[
\vartheta(\delta)=\delta^2 s^2+(\delta')^2(k^2-s^2). 
\]

Note that $\vartheta(\delta)$ is convex and minimized at 
$\delta_*=(k-s)/2s$, at which point  
$\vartheta(\delta_*)=(k-s)k/2$. 
Therefore, when $\delta<1$, 
\begin{equation}\label{E_case1}
{k\choose k-s}P_s(\delta)\le  (ke^{-\eps\alpha k})^{k-s},
\end{equation}
for some $\eps>0$. 

{\it Case 2.} Next, we consider $\delta\ge1$. 
We claim that 
\begin{equation}\label{E_case2}
{k\choose s}P_s(\delta)
\le (k e^{-\eps\alpha k})^s,
\end{equation}
for some $\eps>0$. 
We will consider the cases that
$k/2s$ is smaller/larger than $\delta$
separately. In the former case, 
we consider only the large deviation by $e_S$,
and in the latter case, 
only the deviation by 
$e_S'$.

{\it Case 2a.} If $k/2s\le \delta$, 
then consider the event  
$e_S=(1+\delta)\E(e_S)$. By \eqref{E_CherBig} and \eqref{E_Es}, this occurs
with probability at most $\exp[-\eps\alpha s^2(k/s)]$, 
for some $\eps>0$. Therefore \eqref{E_case2} holds in this case. 

{\it Case 2b.} If $k/2s > \delta$,  
then consider the event  
$e_S'\le (1-\delta'+o(1))\E(e_S')$, with $\delta'$ as in \eqref{E_deltaprime}.
Since $1\le \delta< k/2s$, we have $\delta'>1/3$. 
By \eqref{E_Cher} and \eqref{E_Es},
this occurs with probability 
at most $\exp[-\eps\alpha(k^2-s^2)]$, 
for some $\eps>0$. Then, since $s< k/2$, we find, once
again, that  \eqref{E_case2} holds. 

Finally, we take a union bound, summing over all relevant 
values $3\le s<k$ for the size of $S$. 
For each such $S$, there are $O(s^2)$ possible values of $e_S$. 
Therefore, combining \eqref{E_case1} and \eqref{E_case2} above, 
we find that 
\eqref{E_LD} holds for some such $S$ with probability at most 
$O(k^3e^{-\eps\alpha k})$, 
for some $\eps>0$. 
This is $\ll1$ for 
$\alpha\ge A(\log k)/k$, 
for large $A$, 
in which case 
$\G_{k,\alpha}$ is strictly balanced with high probability. 
Furthermore, for large $A$, this probability is summable. 
Therefore,  by the Borel--Cantelli lemma, 
almost surely, we have that $\ell=1/\lambda$, for all large $k$.
\end{proof}

\subsection{Smaller $A$}
\label{S_smallA}
For ease of exposition
(and since, 
in our view, $\alpha=1/2$
is the most interesting case)  
we have not pursued 
the smallest possible 
constant $A$ in the proof above.  
However, we expect that more technical arguments 
can show that, with high probability, 
${\mathcal G}_{k,\alpha}$ is balanced, 
and so $\ell=1/\lambda$, provided that $\alpha=A(\log k)/k$
with $A> 2/\log(e/2)$. 

Let us give  
some brief intuition in this direction. 
Recall that, in the proof above,  
the case $s=k-1$ is critical. This corresponds (roughly speaking)
to the existence
of a vertex $v$ with degree less than half the average degree
in the subgraph $F\subset {\mathcal G}_{k,\alpha}$ induced
by the other $k-1$ vertices. 
Applying the sharper,  relative entropy 
tail bound for the binomial (see, e.g., 
Diaconis and Zabell
\cite[Theorem 1]{DZ91}) when   
$\alpha = A (\log k)/k$, we have that   
\[
\P({\rm Bin}(k,\alpha)\le \alpha k/2)
\le O\left(\frac{k^{-(A/2)\log (e/2)}}{\sqrt{\log k}}\right).
\]
Large deviations of $e_F$ 
are significantly less likely, as the 
expected number of edges in $F$ is of a larger order $O(\alpha k^2)$
than that $O(\alpha k)$ of the expected degree of $v$. 
Therefore, when $A= 2/\log(e/2)$, 
it can be seen that 
such a vertex $v$ exists with probability at most 
$O(1/\sqrt{\log k})\ll1$. 
Therefore, if \eqref{E_bal} fails, it is due to some smaller $F$, 
with $s\le k-2$ vertices, 
however,  
the existence of such an $F$ is 
increasingly less likely as $s$ decreases. 
On the other hand, when $A< 2/\log(e/2)$
it can be shown, by the second moment method, 
that with high probability such a vertex $v$ exists, 
in which case 
${\mathcal G}_{k,\alpha}$ is unbalanced. 

In closing, we remark that 
it might be of interest to  study the behavior of 
$\ell$ as $\alpha$ 
decreases to 
the point $\alpha\sim(\log k)/k$ of connectivity. 
Note that $2/\log(e/2)\approx 6.518$. 
When 
the minimum degree of 
${\mathcal G}_{k,\alpha}$
is at least $2$, the general
upper bound 
$\ell\le 1/\lambda_*$
from \cite{BK23} holds.  
In the extreme case that  
${\mathcal G}_{k,\alpha}$ has a leaf, 
the value
of $\ell$ is given by Proposition 26
in \cite{BBM12}.

\makeatletter
\renewcommand\@biblabel[1]{#1.}
\makeatother
\providecommand{\bysame}{\leavevmode\hbox to3em{\hrulefill}\thinspace}
\providecommand{\MR}{\relax\ifhmode\unskip\space\fi MR }
\providecommand{\MRhref}[2]{%
  \href{http://www.ams.org/mathscinet-getitem?mr=#1}{#2}
}
\providecommand{\href}[2]{#2}

\end{document}